\newtheorem{thm}{Theorem}%
\numberwithin{equation}{section} \overfullrule 5pt
\newtheorem{prop}[thm]{Proposition}
\newtheorem{lem}[thm]{Lemma}
\theoremstyle{remark}
\newtheorem{rmk}{Remark}
\date{ }
\begin{document}

\title{Ellipsephic harmonic series revisited}

\author{Jean-Paul Allouche \\
CNRS, IMJ-PRG, Sorbonne, 4 Place Jussieu \\
F-75252 Paris Cedex 05, France \\
{\tt jean-paul.allouche@imj-prg.fr} \\
\and
Yining Hu \\
Institute for Advanced Study in Mathematics \\
Harbin Institute of Technology \\
Harbin, PR China \\
{\tt huyining@protonmail.com} \\
\and
Claude Morin \\
{\tt claude.morin2@gmail.com}
}
\maketitle

\abstract {Ellipsephic or Kempner-like harmonic series are series of inverses of integers whose expansion 
in base $B$, for some $B \geq 2$, contains no occurrence of some fixed digit or some fixed block of digits. 
A prototypical example was proposed by Kempner in 1914, namely the sum inverses of integers whose 
expansion in base $10$ contains no occurrence of a nonzero given digit. Results about such series 
address their convergence as well as closed expressions for their sums (or approximations thereof). 
Another direction of research is the study of sums of inverses of integers that contain only a given finite 
number, say $k$, of some digit or some block of digits, and the limits of such sums when $k$ goes to infinity. 
Generalizing partial results in the literature, we give a complete result for any digit or block of digits in any base. 

\noindent
{\bf Keywords:} Kempner-like series. Ellipsephic numbers. Sum of digits. Counting blocks of digits.

\noindent{\bf MSC Classification:} 11A63, 11B85, 68R15.
}

\maketitle

\section{Introduction}

While the harmonic series $\sum \frac{1}{n}$ is divergent, restricting the indices in the sum to integers
satisfying innocent-looking conditions can yield convergent series. One of the first such examples is 
probably the 1914 result of Kempner \cite{Kempner} stating that the sum of inverses of integers whose 
expansion in base $10$ contains no occurrence of a given digit ($\neq 0$) converges.
After Kempner's paper and the 1916 paper of Irwin \cite{Irwin}, several papers addressed extensions 
or generalizations of this result, as well as closed forms of numerical computations of the sums of the
corresponding series: see, e.g., 
\cite{Alexander,Baillie1, Baillie2, Behforooz, Boas, Craven, Farhi, Fischer, Gordon, Klove, KS, LP, MS, Nathanson1, Nathanson2, Nathanson3, SB, SLF, Wadhwa75, Wadhwa79, WW} 
and the references therein.

\medskip

We will revisit harmonic series with missing digits or missing blocks of digits: these series are called 
Kempner-like harmonic series or ellipsephic harmonic series in the literature (for the origin of the term 
{\it ellipsephic} coined by C. Mauduit, one can look at  \cite[p.~12]{Col} and \cite[Footnote, p.~6]{Hu-N}; 
also see the discussion in \cite{All-Morin}). More precisely, let $B$ an integer $\geq 2$ and let $a_{w,B}(n)$ 
denote the number of occurrences of the word (string) $w$ in the base $B$ expansion of the integer $n$ 
(where $|w|$ is the length of the string $w$), and $s_B(n)$ the sum of the digits of the base $B$ expansion 
of  $n$. It was proven by Farhi \cite{Farhi} that, for any digit $j$ in $\{0, 1, \dots, 9\}$, one has
$$
\lim_{k \to \infty} \sum_{\substack{n \geq 1 \\ a_{j,10}(n) = k}} \frac{1}{n}
= 10 \log 10.
$$
As explained in \cite{All-Morin}, a post on {\tt mathfun} asked for the value of 
$$
\displaystyle\lim_{k \to \infty} \sum_{\substack{n \geq 1 \\ s_2(n) = k}} \frac{1}{n}\cdot
$$
It was proved in \cite{All-Morin} that
\begin{equation}\label{sB}
\lim_{k \to \infty} \sum_{\substack{n \geq 1 \\ s_B(n) = k}} \frac{1}{n}
= \frac{2 \log B}{B-1}\cdot
\end{equation}
Thus we have of course
$$
\lim_{k \to \infty} \sum_{\substack{n \geq 1 \\ a_{1,2}(n) = k}} \frac{1}{n}
=
\lim_{k \to \infty} \sum_{\substack{n \geq 1 \\ s_2(n) = k}} \frac{1}{n} = 2 \log 2.
$$
Here we will evaluate all such series, where $a_{1,B}(n)$ is replaced with
$a_{w,B}(n)$, where $w$ is any block of digits in base $B$, by proving that
\begin{equation}\label{awB}
\lim_{k \to \infty}\sum_{\substack{n \geq 1 \\ a_{w,B}(n) = k}} \frac{1}{n}  = B^{|w|} \log B.
\end{equation}
To prove this result, we will replace $1/n$ with the seemingly more complicated function 
$\log_B(\frac{n}{n+1})$, and make use of results proved in (or inspired by) \cite{Allouche-Shallit1989, Hu-Y}. 
In passing we will generalize \cite{Allouche-Shallit1989,All-Morin} and re-prove \cite{Hu-Y}. 

\section{``Reducing'' the problem}

Let $B$ be an integer $\geq 2$. Let $w$ be a string of letters in $\{0, 1, \dots, B-1\}$. Let $a_{w,B}(n)$
be the number of (possibly overlapping) occurrences of of the string $w$ in the base $B$ expansion of $n$.
First we note that the series $\displaystyle\sum_{\substack{n \geq 1 \\ a_{w,B}(n) = k}} \frac{1}{n}$ 
converges: the proof is the same as in \cite[Lemma~1, p.~194]{Allouche-Shallit1989}, namely one uses a
counting argument for the case of a single digit, and one replaces the base with some of its powers for 
the case of a block of digits. Now, to evaluate the series, the idea is to replace it with a convergent series
$\displaystyle\sum_{\substack{n \geq 1 \\ a_{w,B}(n) = k}} b_w(n)$ whose sum, say $A_w(k)$, 
tends to a limit, say $A_w$ when $k \to \infty$. Furthermore, if we have the property
$b_w(n) - 1/(B^{|w|}n) = {\mathcal O}_w(1/n^2)$ when $n$ tends to infinity, then we obtain
\begin{itemize}
\item[]{} $$\sum_{\substack{n \geq 1 \\ a_{w,B}(n) = k}} \frac{1}{n} \ \text{converges, and \ \ }$$
\item[]{} $$\lim_{k \to \infty} \sum_{\substack{n \geq 1 \\ a_{w,B}(n) = k}} \frac{1}{n} = B^{|w|} A_w.$$
\end{itemize}
(Note that if $a_{w,B}(n)$ tends to infinity, then $n$ must also tend to infinity.)

	Inspired by \cite{Allouche-Shallit1989, Hu-Y}, we define $L(n)$ by $L(0) := 0$ and
	$L(n) := \log_B\left(\dfrac{n}{n+1}\right)$ for $n\geq 1$.
	For a string $w$ over the alphabet $[0, B-1]$, let $v(w)$ denote
	the integer whose expansion in base $B$ is $w$ (with possible leading $0$'s if $w \neq 0^j$).

\begin{prop}\label{prop:minus1}
	Let $w$ be a nonempty string over the alphabet $[0, B-1]$,
	$$g=B^{|w|-1},\quad h=\left\lfloor \frac{v(w)}{B}\right\rfloor.$$
	Then, for all $k\geq 0$,
	\begin{equation}\label{eq:sum1}
	\sum_{\substack{n \\ a_{w,B}(gn+h)=k}} L(Bgn+v(w))=-1,
	\end{equation}
	where the sum is over $n\geq 1$ if $w=0^j$ and $n\geq 0$ otherwise.
\end{prop}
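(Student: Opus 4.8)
\medskip
\noindent\textbf{Plan of proof.}
Write $\ell=|w|$, let $r$ be the last letter of $w$ and $w'$ its prefix of length $\ell-1$, so that $v(w)=Bh+r$, $h=v(w')$ and $g=B^{\ell-1}$. Setting $N:=gn+h$ we have $Bgn+v(w)=BN+r$, and as $n$ ranges over the set indicated in the statement, $N$ ranges over (essentially) the positive integers $\equiv h\ (\mathrm{mod}\ g)$ --- the restriction on the range of $n$ being there precisely to keep $N=gn+h\geq 1$. Thus \eqref{eq:sum1} is equivalent to the identity
\[
\widetilde S_k\ :=\ \sum_{\substack{N\geq 1,\ N\equiv h\ (\mathrm{mod}\ g)\\ a_{w,B}(N)=k}}L(BN+r)\ =\ -1\qquad(k\geq 0),
\]
together with the bookkeeping of the single index $N=0$ in the cases where it is allowed.

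\smallskip
I would prove this by a digit-recursion plus telescoping argument, in the spirit of \cite{Allouche-Shallit1989,All-Morin}. Two elementary facts drive it. First, for every integer $M\geq 1$,
\[
\sum_{i=0}^{B-1}L(BM+i)=\log_B(BM)-\log_B(BM+B)=L(M),\qquad\text{while}\qquad\sum_{i=0}^{B-1}L(i)=-\log_B B=-1
\]
(the last sum telescopes once one uses $L(0)=0$). Second, appending a digit $i$ to the base-$B$ expansion of an integer $N'\geq 1$ creates exactly one new (possibly overlapping) occurrence of $w$ precisely when the last $\ell-1$ digits of $N'$ spell $w'$ and $i=r$; and the event ``the last $\ell-1$ digits of $N'$ spell $w'$'' is nothing but the congruence $N'\equiv h\ (\mathrm{mod}\ g)$ (the range $1\leq N'<B^{\ell-1}$ is checked apart, where both conditions reduce to $N'=h$). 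Hence, for $N'\geq 1$,
\[
a_{w,B}(BN'+i)=a_{w,B}(N')+\mathbf 1\{N'\equiv h\ (\mathrm{mod}\ g)\}\,\mathbf 1\{i=r\}.
\]

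\smallskip
Now introduce the auxiliary sums $T_j:=\sum_{N\geq 1,\ a_{w,B}(N)=j}L(N)$. Every $T_j$ and every $\widetilde S_j$ converges absolutely, since $|L(N)|\leq 1/(N\ln B)$ and $\sum_{a_{w,B}(N)=j}1/N<\infty$ by the convergence recalled at the beginning of this section; so all the rearrangements below are legitimate. Split the sum defining $T_j$ according to whether $N$ is a one-digit integer or $N=BN'+i$ with $N'\geq 1$; using the two facts above (for $N'\equiv h$ one writes $\sum_{i\neq r}L(BN'+i)=L(N')-L(BN'+r)$), the ``$L(N')$''-terms reassemble into the full sum $T_j$, which cancels, leaving
\[
\widetilde S_j-\widetilde S_{j-1}=\sum_{d=1}^{B-1}\mathbf 1\{a_{w,B}(d)=j\}\,L(d)\qquad(j\geq 0,\ \widetilde S_{-1}:=0).
\]
Summing over $j=0,\dots,k$ telescopes the left-hand side to $\widetilde S_k$ and the right-hand side to $\sum_{d=1}^{B-1}\mathbf 1\{a_{w,B}(d)\leq k\}L(d)$; adding back the contribution of the index $N=0$ and distinguishing $\ell\geq 2$ (where no one-digit integer contains $w$, so $\sum_{d=1}^{B-1}L(d)=-1$) from $\ell=1$, one gets the value $-1$, which is \eqref{eq:sum1}.

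\smallskip
The part I expect to require genuine care is the digit-recursion for a block: the overlap structure of $a_{w,B}$ forces one to keep track of the event ``the last $\ell-1$ digits equal $w'$'', and the reason the whole scheme works is that this event is exactly the congruence $N'\equiv h\ (\mathrm{mod}\ g)$ already built into the index set of $\widetilde S_j$ --- that is what lets the recursion close on itself. The remaining ingredients (the telescoping identity for $L$, the absolute-convergence justification for interchanging summations, and the one-digit and $N=0$ boundary terms, the latter being where the hypothesis on the range of $n$ enters) are routine.
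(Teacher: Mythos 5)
Your proposal is correct and is essentially the paper's own proof in a different packaging: both arguments rest on exactly the same two identities, namely the telescoping relation $\sum_{i=0}^{B-1}L(BM+i)=L(M)$ for $M\geq 1$ (with value $-1$ at $M=0$, the paper's equation \eqref{eq:n0}) and the digit-appending recursion $a_{w,B}(BN'+i)=a_{w,B}(N')+\mathbf 1\{N'\equiv h \pmod g\}\mathbf 1\{i=r\}$; the paper merely splits the computation into the two auxiliary quantities $d_w(k)$ and $e_w(k)$ and obtains $S_k-S_{k-1}=-\mathbf 1\{k=0\}$ directly, whereas you combine the two steps into one and then telescope over $j$. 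The one place that needs more care than your sketch admits is the parenthetical ``no one-digit integer contains $w$ when $\ell\geq 2$'': your recursion (like the paper's) implicitly counts occurrences in the leading-zero--padded expansion when $w\neq 0^j$, and under that convention a one-digit $d$ does contain $w$ exactly when $w=0^{\ell-1}d$; this is precisely the case $h=0$, where the $N=0$ boundary term you already track contributes $\mathbf 1\{k=0\}L(v(w))$ and cancels the discrepancy, so the bookkeeping closes, but the two exceptional contributions must be matched explicitly rather than dismissed.
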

%<<<
\begin{proof}
	Let $c$ be the last letter of $w$. Let $d_w(k)$ be defined by 
	$$d_w(k)=\sum_{\substack{n\geq 0 \\ a_{w,B}(n)=k}} L(Bn+c).$$
	(Note that this series converges since $L(n) \sim \frac{1}{n \log B}$ when $n$ goes to infinity.)
	By writing $n=gr+m$, with $r\geq 0$ and $0\leq m\leq g-1$, we see that
	$$d_w(k)=\sum_{m=0}^{g-1}\sum_{\substack{r\geq 0 \\ a_{w,B}(gr+m)=k}} 
	L(Bgr+Bm+c).$$
	Similarly, if we let
	$$e_w(k)=\sum_{\substack{n\geq 0 \\ a_{w,B}(Bn+c)=k}} L(Bn+c)$$
	(which is convergent, like $d_w(k)$), then we have
	$$e_w(k)=\sum_{m=0}^{g-1}\sum_{\substack{r\geq 0 \\ 
	a_{w,B}(Bgr+Bm+c)=k}} L(Bgr+Bm+c).$$
	Note that 
	$$a_{w,B}(Bgr+Bm+c) - a_{w,B} (gr+m)=\begin{cases} 1 &\mbox{if } m=h\\
		0&\mbox{otherwise}
	\end{cases}
	$$
	for $r\geq 0$ if $w\neq 0^j$ and for $r\geq 1$ if $w=0^j$. For $w=0^j$
	and $r=0$, the above difference is $0$ for all $m$ because we do not pad 
	leading $0$'s in this case.
	
	\medskip
	
	\noindent
	Therefore
	\begin{multline}\label{eq:sum2}
	d_w(k)-e_w(k)=
	\sum_{\substack{r\\ a_{w,B}(gr+h)=k}} L(Bgr+v(w))
	\\
	- \sum_{\substack{r\\ a_{w,B}(gr+h)=k-1}} L(Bgr+v(w))
	\end{multline}
	where the sum is over $r\geq 1$ if $w=0^j$ and $r\geq 0$ otherwise.
	
	\noindent
	If we could show that $d_w(k)=e_w(k)$ for $k>0$, then it would follow from
	equation \eqref{eq:sum2} that the value of the sum
	$$\sum_{\substack{r\\ a_{w,B}(gr+h)=k}} L(Bgr+v(w))$$
	is independent of $k$ and hence equal to $d_w(0)-e_w(0)$. To prove this, notice that
	\begin{equation}\label{eq:n0}
	L(n)-\sum_{j=0}^{B-1} L(Bn+j)= \begin{cases}0 & \mbox{ if } n>0\\
		1 & \mbox{ if } n=0,
	\end{cases}
	\end{equation}
	
	\noindent
	and
	\begin{align*}
		\quad \sum_{\substack{n\geq 0\\ a_{w,B}(n)=k}} L(n)
		&=
	\sum_{j=0}^{B-1}\sum_{\substack{n\geq 0\\ a_{w,B}(Bn+j)=k}} L(Bn+j)\\
		&= \!\!\!
		\sum_{\substack{n\geq 0\\ a_{w,B}(Bn+c)=k}} L(Bn+c)
	+\sum_{\substack{j=0\\ j\neq c}}^{B-1}\sum_{\substack{n\geq 0\\ a_{w,B}(n)=k}} L(Bn+j).
	\end{align*}
	Hence 
	\begin{align*}
		e_w(k)&= \sum_{\substack{n\geq 0\\ a_{w,B}(Bn+c)=k}} L(Bn+c)\\
		&= \sum_{\substack{n\geq 0\\ a_{w,B}(n)=k}}  (L(n)-
	\sum_{\substack{j=0\\ j\neq c}}^{B-1}L(Bn+j))\\
		&= \sum_{\substack{n\geq 0\\ a_{w,B}(n)=k}}  (L(n)-
		\sum_{\substack{j=0 }}^{B-1}L(Bn+j))+
		\sum_{\substack{n\geq 0\\ a_{w,B}(n)=k}}  L(Bn+c)
	\end{align*}
	By \eqref{eq:n0}, the first sum is $1$ if $k=0$, and $0$  if $k>0$. 
	The second sum is the definition of $d_w(k)$.
\end{proof}
%>>>

\begin{lem}\label{lem:reduce}
	%<<<
	Let $t$ be an integer whose expansion in base $B$ is 
	$t=b_1b_2\ldots b_s$. For $1\leq r \leq s$, 

	If $b_1\ldots b_r$ is not a suffix of $w$, then
	$$
	\sum_{\substack{ n \\ a_{w,B}(B^rn+v(b_1\ldots b_r))=k}} L(B^sn+t)=
	\sum_{\substack{ n \\ a_{w,B}(B^{r-1}n+v(b_1\ldots b_{r-1}))=k}} L(B^sn+t).
	$$

	If $b_1\ldots b_r$ is  a suffix of $w$, then
	\begin{multline}
	\sum_{\substack{ n \\ a_{w,B}(B^rn+v(b_1\ldots b_r))=k}} L(B^s n+t)= \\
\sum_{\substack{ n \\ a_{w,B}(B^{r-1}n+v(b_2\ldots b_{r}))=k}} L(B^{s-1} n+t')
		\\ -
		\sum_{\substack{j=0\\ j\neq b_1 }}^{B-1}
		\sum_{\substack{ n \\ a_{w,B}(B^{r-1}n+v_j)=k}} L(B^sn+t_j)
	\end{multline}
	where $t'=v(b_2\cdots b_s)$, $t_j=v(jb_2\cdots b_s)$, and
	$v_j=v(jb_2\ldots b_{r-1})$ if $r\geq 2$ and $v_j=0$ if $r=1$.
\end{lem}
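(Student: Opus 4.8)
The plan is to imitate, at the level of a single "peeling step," the computation carried out in the proof of Proposition \ref{prop:minus1}, but now keeping track of the leftmost digit being split off rather than the rightmost. The basic tool is again the ``defect'' identity \eqref{eq:n0}: for every $n \geq 0$,
\begin{equation*}
L(n) - \sum_{j=0}^{B-1} L(Bn+j) = \mathbf{1}_{\{n=0\}}.
\end{equation*}
I would first fix $t = b_1 b_2 \ldots b_s$ and $r$ with $1 \leq r \leq s$, and write every index in the left-hand sum as $m = B^{r} n + v(b_1 \ldots b_r)$. The key bookkeeping observation is that prepending the digit $b_1$ in front of the suffix $b_2 \ldots b_r$ of $m$'s base-$B$ expansion changes the occurrence count $a_{w,B}$ by exactly the number of occurrences of $w$ that straddle the new leading digit — and since $w$ is fixed and the relevant tail of the word is $b_2\cdots b_r$, this correction is nonzero only when $b_1\ldots b_r$ is a suffix of $w$; in that case it contributes exactly $+1$ per qualifying index. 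This is precisely the role played, in Proposition \ref{prop:minus1}, by the statement that $a_{w,B}(Bgr+Bm+c) - a_{w,B}(gr+m)$ equals $1$ when $m=h$ and $0$ otherwise.

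In the non-suffix case this already gives the result almost for free: since the occurrence count is unchanged whether or not one prepends $b_1$, one has $a_{w,B}(B^r n + v(b_1\ldots b_r)) = a_{w,B}(B^{r-1} n + v(b_1 \ldots b_{r-1}))$ for all $n$ (here $B^{r-1}n+v(b_1\ldots b_{r-1})$ is just $m$ with its leading $b_1$ stripped), and since the summand $L(B^s n + t)$ is literally the same, the two sums are equal term by term. I would spell out the one subtlety: stripping the leading digit $b_1$ may create leading zeros, but because $a_{w,B}$ ignores leading zeros and the hypothesis is exactly that the relevant straddling occurrences are absent, no term is gained or lost; this mirrors the ``we do not pad leading $0$'s'' remark in Proposition \ref{prop:minus1}.

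The suffix case is the real work, and it is where I expect the main obstacle to lie. Here I would apply \eqref{eq:n0} to $n' := B^{r-1}n + v(b_2\ldots b_r)$, i.e. expand $L(n')$ as $\sum_{j=0}^{B-1} L(Bn'+j)$ plus the indicator correction, and then isolate the $j = b_1$ term — which reassembles $B^r n + v(b_1\ldots b_r)$ inside $L$ and (by the suffix hypothesis) carries the shifted occurrence count $a_{w,B}(\cdot) = k$ — from the $j \neq b_1$ terms, which after re-indexing become exactly the $v_j, t_j$ sums in the statement. Summing over all $n$ with $a_{w,B}(n') = k$, the left side becomes $\sum_n L(B^{s-1}n + t')$ with the constraint $a_{w,B}(B^{r-1}n + v(b_2\ldots b_r)) = k$, the distinguished $j=b_1$ piece becomes the target sum $\sum_n L(B^s n + t)$ with $a_{w,B}(B^r n + v(b_1\ldots b_r)) = k$, and transposing it yields the claimed three-term identity. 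The delicate points to get right are: (i) confirming that prepending $j = b_1$ shifts the count by $+1$ while prepending $j \neq b_1$ does not shift it (so the constraint $a_{w,B} = k$ is the correct one on each piece — this is again the straddling-occurrence count, now with a mismatched first digit), (ii) checking that the indicator term from \eqref{eq:n0} contributes nothing, since $n' = 0$ forces a configuration incompatible with $k$ occurrences for $k$ in the relevant range (or is handled by the leading-zero convention exactly as in Proposition \ref{prop:minus1}), and (iii) the degenerate $r=1$ case, where $b_2\ldots b_{r-1}$ is empty and one sets $v_j = 0$, so that the inner sums on the right run over plain $a_{w,B}(n) = k$ — this needs to be stated separately but requires no new idea. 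Convergence of every series involved is immediate from $L(n) \sim 1/(n\log B)$ together with the counting bound already invoked for \eqref{eq:sum1}, so no new estimate is needed.
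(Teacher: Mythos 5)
Your non-suffix case is essentially right, and it is what the Allouche--Shallit argument invoked by the paper does: passing from the condition on $B^{r-1}n+v(b_1\cdots b_{r-1})$ to the condition on $B^rn+v(b_1\cdots b_r)$ amounts to \emph{appending $b_r$ at the least-significant end}, which creates a new occurrence of $w$ only if the resulting word ends in $w$, hence only if $b_1\cdots b_r$ is a suffix of $w$; when it is not, the two counting conditions agree for every $n$ and the sums coincide term by term. (Note, though, that your narrative describes the operation as ``prepending $b_1$,'' i.e.\ with the orientation reversed; you land on the correct criterion, but this same reversal is what derails the second half.)

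The suffix case rests on a step that does not work. You propose to apply \eqref{eq:n0} to $n'=B^{r-1}n+v(b_2\cdots b_r)$ and claim that the $j=b_1$ term $L(Bn'+b_1)$ reassembles $L$ at $B^rn+v(b_1\cdots b_r)$. But
$$Bn'+b_1=B^rn+v(b_2\cdots b_rb_1),$$
whose expansion is $(\text{digits of }n)\,b_2\cdots b_rb_1$: the digit $b_1$ lands at the units position, not between the digits of $n$ and the block $b_2\cdots b_r$, so this is \emph{not} $B^rn+v(b_1b_2\cdots b_r)$. Moreover you identify $\sum L(n')$ with $\sum_n L(B^{s-1}n+t')$, conflating the argument of the counting condition (which fixes only $r-1$ trailing digits) with the argument of $L$ (which fixes $s-1$ of them); these agree only if $r=s$, whereas already the first step after Proposition \ref{prop:minus1} has $r=s-1$. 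In fact \eqref{eq:n0} is not needed here at all: the correct mechanism is a partition of the free index, not a telescoping of $L$. Take the first sum on the right-hand side and write $n=Bn_1+j$, $0\le j\le B-1$; then $B^{s-1}(Bn_1+j)+t'=B^sn_1+t_j$ and $B^{r-1}(Bn_1+j)+v(b_2\cdots b_r)=B^rn_1+v(jb_2\cdots b_r)$. The class $j=b_1$ reproduces the left-hand side exactly, while for $j\ne b_1$ the trailing block $jb_2\cdots b_r$ differs in its first letter from the length-$r$ suffix $b_1\cdots b_r$ of $w$, so by the non-suffix observation the condition reduces to $a_{w,B}(B^{r-1}n_1+v_j)=k$, yielding precisely the subtracted sums; transposing gives the identity. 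Your checks (ii) and (iii) then become the routine $n=0$/leading-zero verifications for this partition rather than an indicator term coming from \eqref{eq:n0}.
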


\medskip

The proof is the same as in \cite{Allouche-Shallit1989}.

\medskip

\begin{thm}\label{thm:general}
	There is a rational function $b_w(n)$ such that for all $k\geq 0$ we have
	\begin{equation}
		\sum_{\substack{n\\ a_{w,B}(n)=k}} \log_B(b_w(n)) = -1.
	\end{equation}
	(The summation is over $n\geq 1$ for $w=0^j$ and $n \geq 0$ otherwise.)
	and 
	\begin{equation}\label{eq:bworder}
		\log(b_w(n)) = -\frac{1}{B^{|w|}n} + \mathcal{O}(1/n^2).
	\end{equation}
\end{thm}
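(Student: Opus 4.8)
The plan is to construct $b_w(n)$ from the function $L(n)=\log_B(n/(n+1))$ by using Proposition~\ref{prop:minus1} as the base case and then repeatedly applying Lemma~\ref{lem:reduce} to ``peel off'' the constraint one digit at a time, reducing the argument $B^rn+v(b_1\dots b_r)$ of $a_{w,B}$ down to $n$ itself. Concretely, start from \eqref{eq:sum1}, which asserts $\sum_{a_{w,B}(gn+h)=k} L(Bgn+v(w)) = -1$ with $g=B^{|w|-1}$ and $h=\lfloor v(w)/B\rfloor$; here the inner index $gn+h$ still carries the $|w|-1$ leading letters of $w$. I would then iterate Lemma~\ref{lem:reduce}: each application either leaves the shape of the sum intact (replacing $v(b_1\dots b_r)$ by $v(b_1\dots b_{r-1})$ when the prefix is not a suffix of $w$) or splits it into a ``main'' term with a shorter $L$-argument plus $B-1$ ``error'' terms indexed by the wrong digit $j\ne b_1$. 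Since each error term again has the form $\sum_{a_{w,B}(B^{r-1}n+v_j)=k} L(B^s n+t_j)$ with a shorter constraint prefix, one can recurse on those as well; the whole process terminates because the length of the constraint prefix strictly decreases. Collecting everything, one obtains an identity $\sum_{a_{w,B}(n)=k}\sum_i \varepsilon_i L(B^{s_i} n + t_i) = -1$ for a finite collection of shifts, with signs $\varepsilon_i\in\{+1,-1\}$, independent of $k$; one then sets $b_w(n) := \prod_i \big((B^{s_i}n+t_i)/(B^{s_i}n+t_i+1)\big)^{\varepsilon_i}$, which is manifestly a rational function of $n$ and satisfies $\sum_{a_{w,B}(n)=k}\log_B b_w(n)=-1$.

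For the asymptotic estimate \eqref{eq:bworder}, I would expand each factor: $\log(B^{s_i}n+t_i)-\log(B^{s_i}n+t_i+1) = -\frac{1}{B^{s_i}n}+\mathcal{O}(1/n^2)$, so $\log b_w(n) = -\big(\sum_i \varepsilon_i B^{-s_i}\big)\frac{1}{n} + \mathcal{O}(1/n^2)$ in natural logarithm. The claim is then equivalent to the numerical identity $\sum_i \varepsilon_i B^{-s_i} = B^{-|w|}\log B$ — wait, more precisely, reading \eqref{eq:bworder} in natural log, I need $\sum_i \varepsilon_i B^{-s_i} = \frac{1}{B^{|w|}}$ (the $\log B$ there is the conversion factor hidden in $\log_B b_w = \log b_w/\log B$, so the statement as written with $\log$ meaning $\log_B$ would read $-\frac{1}{B^{|w|}n\log B}$; either way the content is a single rational identity among the shifts). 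Rather than track all shifts explicitly, the cleanest route is to observe that the constant $\sum_i \varepsilon_i B^{-s_i}$ equals the "mass at infinity" of the signed measure built during the reduction, and to compute it by the same telescoping bookkeeping that produced the identity: in Proposition~\ref{prop:minus1} the single term $L(Bgn+v(w))$ contributes weight $B^{-|w|}$ (since $Bg = B^{|w|}$), and each step of Lemma~\ref{lem:reduce} preserves the total signed weight, because in the splitting case the main term has argument $B^{s-1}n+t'$ with weight $B^{-(s-1)}$ while the $B-1$ error terms have weight $B^{-s}$ each and come with a minus sign — wait, that gives $B^{-(s-1)} - (B-1)B^{-s} = B^{-s}$, not weight-preserving, so I should instead check that it matches the weight $B^{-s}$ of the left-hand term, which it does. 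Hence the total weight is invariant throughout and equals $B^{-|w|}$, giving \eqref{eq:bworder}.

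The main obstacle I anticipate is not any single estimate but the bookkeeping: making sure that the recursive application of Lemma~\ref{lem:reduce} genuinely terminates and that the error terms it spawns are themselves of a form the lemma can consume (i.e. that one never gets stuck with a constraint of the form $a_{w,B}(B^{r}n + v)$ where $v$ is not of the type $v(b_1\dots b_r)$ for some digit string — this is handled since $v_j = v(jb_2\dots b_{r-1})$ is exactly of that type). A secondary subtlety is the $w=0^j$ case, where the summation ranges start at $n\ge1$ rather than $n\ge0$ and leading zeros are not padded; one must check that the boundary term $L(0)=0$ and the "$n=0$" exceptional values in \eqref{eq:n0} and in Proposition~\ref{prop:minus1} are consistent through the reduction, but this is exactly parallel to the single-digit treatment in \cite{Allouche-Shallit1989} and introduces no new phenomenon. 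Finally, one should remark that $b_w$ being a ratio of the specified linear factors makes it automatic that $b_w(n) \to 1$ and that $\log_B b_w(n)$ is summable over $\{n : a_{w,B}(n)=k\}$, so no extra convergence argument beyond what was already given for $\sum 1/n$ is needed.
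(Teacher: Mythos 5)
Your proposal is correct and follows essentially the same route as the paper: Proposition~\ref{prop:minus1} as the base identity, iterated application of Lemma~\ref{lem:reduce} (your recursion on the constraint-prefix length is the paper's tree $T$), and the observation that the signed first-order weight $\sum_i \varepsilon_i B^{-s_i}$ is preserved at each splitting step via $B^{-(s-1)}-(B-1)B^{-s}=B^{-s}$, which is exactly the paper's inductive computation. The self-correction in your weight bookkeeping lands in the right place, and your reading of the $\log$ versus $\log_B$ normalization in \eqref{eq:bworder} agrees with the paper's.
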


\begin{proof}
	The existence of $b_w(n)$ follows from Proposition \ref{prop:minus1} and
	iterated applications of Lemma \ref{lem:reduce}. 
	The process of obtaining $b_w(n)$ can be visualized by a tree $T$ whose
	root is 
	$$
	\sum_{\substack{n \\ a_{w,B}(gn+h)=k}} L(Bgn+v(w))
	$$
	and a node is a leaf if the condition of $n$ for sum is $a_{w,B}(n)=k$,
	has a child corresponding to the right side if we are in the first case in 
	Lemma \ref{lem:reduce}, and has $B$ children corresponding to the $B$ terms 
	(minus signs are included in the terms) of the right side if we are in the 
	second case. Then $b_w(n)$ is the sum of the summands of the leaves of this 
	tree.
	To prove \eqref{eq:bworder}, we first notice that
	$$\log B\cdot L(an+b)=\log\left(1-\frac{1}{an+b+1}\right)
	=-\frac{1}{an}+\mathcal{O}(1/n^2) $$
	where $a$ and $b$ are positive constants.
	In particular, in \eqref{eq:sum1}, 
	$$L(Bgn+v(w))=-\frac{1}{\log B\cdot B^{|w|}n}+\mathcal{O}(1/n^2).$$
	Then, we note that, in $T$, the first order term in the summand of each
	node that is not a leaf is the sum of the first order term in its children.
	For example, when a node has $B$ children and the first order term
	of the summand is
	$-\dfrac{1}{\log B\cdot B^sn}$, then the sum of first order terms of the
	summands of its children is 
	$$-\frac{1}{\log B\cdot B^{s-1}n} - \sum_{\substack{j=0\\ j\neq b_1}}^{B-1}
	\left(-\frac{1}{\log B \cdot B^sn}\right)= -\frac{1}{\log B\cdot B^sn}.$$
	By induction we conclude that
	\begin{equation*}
	\log_B(b_w(n))=  -\frac{1}{\log B\cdot B^{|w|}n}+\mathcal{O}(1/n^2).
	\qedhere
	\end{equation*}
\end{proof}

\begin{rmk}
Theorem \ref{thm:general} above generalizes the case $B = 2$ in \cite{Allouche-Shallit1989} (also see
\cite{Allouche-Hajnal-Shallit}). It can also give another proof of \cite[Theorem~3]{Hu-Y}.
\end{rmk}

\medskip

\begin{rmk}
Actually the same ``reducing trick'' can be used to re-prove Equation~(\ref{sB}) by using a result in
\cite{ACMFS}. Namely, up to notation, it was proved in \cite[Lemme, p.~142]{ACMFS} that, for all $k \geq 0$,
\begin{equation}\label{log}
\sum_{s_B(n) = k} \log\left(\frac{n+1}{B\lfloor n/B \rfloor + B}\right) = - \log B.
\end{equation}
Define the fractional part of $n/B$ by $\{n/B\} := n/B - \lfloor n /B \rfloor$. Then, we have when $n$ tends 
to infinity,
$$
\begin{array}{lll}
\displaystyle\log\left(\frac{n+1}{B\lfloor n/B \rfloor + B}\right) 
&=& \displaystyle\log\left(1 + \frac{1 - B + B\{n/B\}}{n+B(1 - \{n/B\})}\right) \\
&=& \displaystyle\log\left(1 + \frac{1 - B + B\{n/B\}}{n}\right) + O(1/n^2) \\
&=& \displaystyle\frac{1 - B}{n} + \left(\frac{B\{n/B\}}{n}\right) + O(1/n^2). \\
\end{array}
$$
Thus (convergences are consequences of, e.g., Equation~\ref{log}, see \cite{ACMFS}):  
\begin{align*}
- \log B &= \sum_{s_B(n) = k} \log\left(\frac{n+1}{B\lfloor n/B \rfloor + B}\right) \\
&= (1 - B) \sum_{s_B(n) = k} \frac{1}{n} \ + \sum_{s_B(n) = k} \frac{B\{n/B\}}{n} +  O(1/n^2).
\end{align*}
Hence (note that the term $O(1/n^2)$ below can be chosen independent of $k$),
\begin{align*}
- \log B & = (1 - B) \sum_{s_B(n) = k} \frac{1}{n} + \ 
\sum_{0 \leq j \leq B-1} \sum_{s_B(n) = k-j}\frac{j}{Bn+j} \ + O(1/n^2) \\
&= (1 - B) \sum_{s_B(n) = k} \frac{1}{n} + \
\sum_{0 \leq j \leq B-1} \sum_{s_B(n) = k-j}\frac{j}{Bn} \ + O(1/n^2). 
\end{align*}
Now, if $k$ tends to infinity, we have that $k-j$ tends to infinity for $j \in [0, B-1]$, and also that $n$ must 
tend to infinity, hence, letting $\lim_{k \to \infty}\sum_{s_B(n) = k} \frac{1}{n} := \ell$,
$$
- \log B = (1 - B) \ \ell+ \sum_{0 \leq j \leq B-1} \frac{j}{B} \ \ell = - \frac{B-1}{2} \ \ell, \ \ \
\text{thus \ \ $\ell = \frac{2 \log B}{B-1}\cdot$}
$$
\end{rmk}

\section{Statements and Declarations}

The authors have no competing interests. 
No funds, grants, or other support were received.

\end{document}